\newtheorem{thm}{Theorem}
\newtheorem{cor}[thm]{Corollary}
\newtheorem{prop}[thm]{Proposition}
\newtheorem{prob}[thm]{Problem}
\newtheorem{lem}[thm]{Lemma}
\theoremstyle{remark}
\theoremstyle{definition}
\renewcommand{\bar}{\overline}
\newcommand{\Z}{{\mathbb{Z}}}
\newcommand{\uB}{{{B}}}
\newcommand{\uG}{{{G}}}
\newcommand{\uT}{{{T}}}
\newcommand{\cG}{{\mathcal{G}}}
\newcommand{\Aut}{\mathrm{Aut}\,}
\newcommand{\GL}{\mathrm{GL}}
\newcommand{\pr}{\mathrm{pr}}
\newcommand{\Spec}{\mathrm{Spec}\;}
\newcommand{\m}{\mathfrak m}
\newcommand{\Alt}{{\raise 2pt\hbox{$\scriptstyle\bigwedge$}}}
\newcommand{\e}{\epsilon}
\begin{document}
\title[A probabilistic Tits alternative and probabilistic identities]
{A probabilistic Tits alternative and probabilistic identities}

\author{Michael Larsen}
\email{mjlarsen@indiana.edu}
\address{Department of Mathematics\\
    Indiana University \\
    Bloomington, IN 47405\\
    U.S.A.}

\author{Aner Shalev}
\email{shalev@math.huji.ac.il}
\address{Einstein Institute of Mathematics\\
    Hebrew University \\
    Givat Ram, Jerusalem 91904\\
    Israel}

\thanks{ML was partially supported by NSF grant DMS-1401419.
AS was partially supported by ERC advanced grant 247034,
ISF grant 1117/13 and the Vinik Chair of mathematics which he holds.}

\begin{abstract}

We introduce the notion of a probabilistic identity of
a residually finite group $\Gamma $. By this we mean a non-trivial word
$w$ such that the probabilities that $w=1$ in the finite quotients 
of $\Gamma $ are bounded away from zero.

We prove that a finitely generated linear group satisfies
a probabilistic identity if and only if it is virtually solvable.

A main application of this result is a probabilistic variant of the Tits
alternative: let $\Gamma$ be a finitely generated linear group
over any field and let $G$ be its profinite completion. 
Then either $\Gamma$ is virtually solvable, or, for any $n \ge 1$,
$n$ random elements $g_1, \ldots , g_n$ of $G$ freely generate a
free (abstract) subgroup of $G$ with probability $1$.

We also discuss related problems and applications.
\end{abstract}

\maketitle

\newpage

\section{Introduction}

The celebrated Tits alternative \cite{T} asserts that a finitely generated
linear group is either virtually solvable or has a (nonabelian) free subgroup.
A number of variations and extensions of this result have been obtained
over the years.
In particular it is shown by Breuillard and Gelander \cite{BG}
that if $\Gamma$ is a finitely generated linear group which is
not virtually solvable then its profinite completion 
$\widehat{\Gamma}$ has a dense free subgroup of finite rank 
(this answers a question from \cite{DPPS}, where a somewhat 
weaker result was obtained). 
The purpose of this paper is to
establish a probabilistic  version of the Tits alternative, and to relate it
to the notion of probabilistic identities, which is interesting
in its own right.

In order to formulate our first result, 
let us say that a profinite group $G$ is {\em randomly free} if for any
positive integer $n$ the set of $n$-tuples in $G^n$ which
freely generate a free subgroup of $G$ (isomorphic to $F_n$)
has measure 1 (with respect to the normalized Haar measure
on $G^n$). We also say that a (discrete) residually finite
group $\Gamma$ is randomly free if its profinite completion
is randomly free.
 
Recall that related notions have already been studied in various contexts.
For example, in \cite{E} it is shown that connected finite-dimensional
nonsolvable real Lie groups are randomly free (in the sense that
the set of $n$-tuples which do not freely generate a free subgroup
has measure zero). We can now state our probabilistic Tits alternative.

\begin{thm} 
\label{first-cor}
Let $\Gamma$ be a finitely generated linear group over any field.
Then either $\Gamma$ is virtually solvable or $\Gamma$ is randomly free.
\end{thm}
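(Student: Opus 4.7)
The plan is to derive Theorem~\ref{first-cor} from the probabilistic identity theorem announced in the abstract, namely that a finitely generated linear group satisfies a probabilistic identity if and only if it is virtually solvable. The bridge will be a general equivalence, valid for any finitely generated residually finite group $\Gamma$ with profinite completion $G$: \emph{$\Gamma$ is randomly free if and only if $\Gamma$ admits no probabilistic identity.}

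First I would analyze the Haar measure of the closed equation set
\[
E_w \;=\; \{(g_1,\ldots,g_n) \in G^n : w(g_1,\ldots,g_n) = 1\}
\]
attached to a non-trivial word $w \in F_n$. For each finite quotient $Q$ of $\Gamma$ let $\pi_Q : G \to Q$ be the projection and $S_{w,Q} \subseteq Q^n$ the corresponding solution set. Residual finiteness of $\Gamma$ gives
\[
E_w \;=\; \bigcap_Q (\pi_Q^n)^{-1}(S_{w,Q})
\]
as $Q$ ranges over the directed system of finite quotients; each preimage is clopen of Haar measure $\Pr_{Q^n}(w=1)$, and the preimages decrease as $Q$ grows. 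Continuity of measure then yields $\mu(E_w) = \inf_Q \Pr_{Q^n}(w=1)$, so $w$ is a probabilistic identity of $\Gamma$ precisely when $\mu(E_w) > 0$.

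Second, the set of $n$-tuples in $G^n$ that fail to freely generate a free subgroup of $G$ is $\bigcup_{w \in F_n \setminus \{1\}} E_w$, a countable union. By countable subadditivity it has positive Haar measure if and only if some individual $E_w$ does, which together with the first step gives the desired equivalence. The probabilistic identity theorem then delivers the dichotomy of Theorem~\ref{first-cor}. The two alternatives are mutually exclusive, so it really is a dichotomy: if $N \triangleleft \Gamma$ is a solvable normal subgroup of finite index $m$ and derived length $d$, then $g^m \in N$ for every $g \in \Gamma$, and $\Gamma$ satisfies the honest non-trivial identity $\delta_d(x_1^m,\ldots,x_{2^d}^m) = 1$, which is \emph{a fortiori} a probabilistic identity and forces $\Gamma$ not to be randomly free.

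The only substantive input is the probabilistic identity theorem itself, which is surely the real obstacle and the deep content of the paper; granting it, the translation to the probabilistic Tits alternative is essentially formal, relying only on continuity of Haar measure on profinite groups, countable subadditivity of measure, and residual finiteness of $\Gamma$ (automatic for finitely generated linear groups).
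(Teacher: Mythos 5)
Your proposal is correct and matches the paper's own derivation of Theorem~\ref{first-cor} from Theorem~\ref{main}: identify the failure set as the countable union $\bigcup_{1\ne w\in F_n} E_w$, apply Theorem~\ref{main} to conclude $\mu(E_w)=0$ for each $w$, and invoke $\sigma$-additivity of Haar measure. The extra unpacking of $\mu(E_w)$ as $\inf_Q \Pr_{Q^n}(w=1)$ and the check that the two alternatives are mutually exclusive are fine supplementary details but do not change the argument.
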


The proof of this result relies on the notion and properties
of probabilistic identities which we introduce below.

Let $w=w(x_1, \ldots , x_n)$ be a non-trivial element of the free group $F_n$, 
and let $\Gamma $ be a residually finite group. 
Consider the induced word map $w_\Gamma : \Gamma ^n \rightarrow \Gamma $. 
If the
image (denoted by $w(\Gamma )$) of this map is $\{ 1 \}$ then $w$ is an
{\em identity} of $\Gamma $. We say that $w$ is a {\em probabilistic identity} of 
$\Gamma $ if there exists $\e > 0$ such that for each finite quotient $\Gamma/\Delta$
of $\Gamma $, the probability $P_{\Gamma/\Delta}(w)$ that $w(h_1, \ldots , h_n)=1$ 
(where the $h_i \in {\Gamma/\Delta}$ are chosen independently with respect to the uniform distribution 
on ${\Gamma/\Delta}$) is at least $\e$. This amounts to saying that, in the
profinite completion $\widehat{\Gamma }$ of $\Gamma $, the probability
(with respect to the Haar measure) that $w(g_1, \ldots , g_n) = 1$
is positive.

For example, $w=x_1^2$ is a probabilistic identity of the infinite
dihedral group $\Gamma  = D_{\infty}$, since in any finite quotient 
${\Gamma/\Delta} = D_n$ of $\Gamma $ we have $P_{\Gamma/\Delta}(w) \ge 1/2$.

More generally, probabilistic identities may be regarded as an extension
of the notion of coset identities.
Recall that a word $1 \ne w \in F_n$ is said to be a {\em coset identity} 
of the infinite group $\Gamma$ if there exists a finite index subgroup 
$\Delta\subseteq \Gamma$ and cosets $g_1\Delta, \ldots , g_n\Delta$ 
(where $g_i \in \Gamma$) such that 
$w(g_1\Delta, \ldots , g_n\Delta) = \{ 1 \}$.

Our main result on probabilistic identities is the following.

\begin{thm} 
\label{main}
Let $\Gamma $ be a finitely generated linear group over any field.
Then $\Gamma $ satisfies a probabilistic identity if and only if
$\Gamma $ is virtually solvable.
\end{thm}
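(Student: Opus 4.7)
The ``if'' direction is immediate. If $\Gamma$ is virtually solvable, let $H \trianglelefteq \Gamma$ be a normal solvable subgroup of finite index $m$ and derived length $d$. Then $w(x_1,\dots,x_{2^d}) = \delta_d(x_1^m,\dots,x_{2^d}^m)$, where $\delta_d$ is the iterated commutator word of depth $d$, is in fact an identity of $\Gamma$: each $\gamma^m$ lies in $H$ because $\Gamma/H$ has exponent dividing $m$, and $\delta_d$ vanishes identically on the solvable group $H$. Any identity is a fortiori a probabilistic identity.

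For the converse, assume $\Gamma$ is finitely generated linear, not virtually solvable, and fix $w \in F_n \setminus \{1\}$. The plan is to produce a sequence of finite quotients $Q_i$ of $\Gamma$ with $P_{Q_i}(w) \to 0$, which directly contradicts $w$ being a probabilistic identity. Let $G$ be the Zariski closure of $\Gamma$ and let $\bar G := G^0/R(G^0)$ denote the semisimple quotient of its identity component, non-trivial by hypothesis. Embed $\Gamma \leq \GL_N(A)$ for a finitely generated integral domain $A$. The strong approximation theorems of Matthews--Vaserstein--Weisfeiler, Nori, and Pink then yield, for all but finitely many maximal ideals $\m \subset A$, a finite quotient $Q_\m$ of $\Gamma$ that, up to a bounded central kernel and bounded-index defect, equals the group $H(\F_{q_\m})$ of $\F_{q_\m}$-points of an absolutely almost simple algebraic group $H$ of fixed root type, with $q_\m \to \infty$. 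To bound $P_{H(\F_q)}(w)$, I would invoke Borel's theorem together with its positive-characteristic extensions by Larsen: the word map $w \colon H^n \to H$ is dominant for any non-trivial $w$, so $V := w^{-1}(1) \subset H^n$ has codimension at least one. The Lang--Weil estimates then give $|V(\F_q)| = O(q^{n\dim H - 1})$ while $|H(\F_q)^n| \asymp q^{n\dim H}$, hence $P_{H(\F_q)}(w) = O(q^{-1}) \to 0$; up to bounded multiplicative factors the same bound holds for $P_{Q_\m}(w)$.

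The main obstacle is the strong approximation step: uniformly extracting \emph{bona fide} finite quotients of $\Gamma$ itself, not merely of its profinite completion $\widehat\Gamma$, of essentially the form $H(\F_{q_\m})$ along an infinite sequence of specialisations, over an arbitrary base field (including positive characteristic), and with controlled central kernels arising from possibly non-simply-connected $\bar G$. One must also handle the connectedness of the Zariski closure and the passage between $\Gamma$ and a finite-index subgroup while preserving a (possibly modified) probabilistic identity. Once these reductions are in place, the Borel--Lang--Weil bound is essentially formal algebraic geometry.
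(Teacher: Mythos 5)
Your route is, in outline, exactly the one the authors describe as their abandoned first approach --- strong approximation to produce finite quotients of Lie type, then a quantitative estimate $P(w)\to 0$ --- and the obstacles you flag are the reason they abandoned it. Two of them are not mere bookkeeping. First, a probabilistic identity of $\Gamma$ does \emph{not} descend to a finite-index subgroup $\Gamma_0$: $w=x^2$ is a probabilistic identity of $D_\infty$ (every finite dihedral quotient satisfies $P\ge 1/2$), but not of its index-two subgroup $\Z$, where $P_{\Z/m}(x^2)\le 2/m\to 0$. So if your plan is to apply strong approximation to the subgroup $\Gamma_0$ whose Zariski closure is connected (and then pass to a simply connected cover), you cannot carry $w$ along, and the obvious substitute $w(x_1^{m!},\ldots,x_n^{m!})$ does not visibly inherit positive probability. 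Second, if instead you keep $\Gamma$ itself and let $Q_\m$ be its image in $G(A/\m)$, then when the Zariski closure $G$ is disconnected --- which you cannot assume away --- $Q_\m$ is not $H(\F_q)$ for a connected $H$ but a group lying between a quasi-simple normal subgroup and its automorphism group. Bounding $\max_g P_{Q_\m,w}(g)$ for such groups is exactly the ``$T^k\le G\le\Aut(T^k)$'' problem that the authors say forced CFSG-style casework in their first version.

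The published proof sidesteps all of this. It places $\Gamma$ in an affine group scheme $\cG$ over a finitely generated $\Z$-algebra $A$, takes Haar measure on the closure of $\Gamma^n$ inside $\prod_\m\cG(A/\m)$, and runs a Noetherian descent (Proposition~\ref{measure}): if the fibre $w^{-1}(g)$ carried positive measure, B.~H.~Neumann's theorem on covering a group by finitely many cosets would let one intersect with a $\Gamma^n$-translate to obtain a strictly smaller closed set still carrying positive measure --- unless the set is $\Gamma^n$-finite, i.e.\ a union of connected components of $G^n$ on the generic fibre, and Proposition~\ref{nonconstant} (via the centralizer of a highest-root $\SL_2$ together with Borel's dominance theorem) rules that out for word fibres. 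No strong approximation, no Lang--Weil, no CFSG, no finite-index reduction. To complete your version you would need a disconnected analogue of Proposition~\ref{nonconstant} feeding into Lang--Weil, plus a careful treatment of the image of $\Gamma$ (not just $\Gamma_0$) under reduction; as written the sketch does not get there.
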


Theorem~\ref{main} has several consequences. 

First, it implies that
a finitely generated linear group which satisfies a coset identity
is virtually solvable, a result which was proved in \cite{BG} 
(see Theorem 8.4 there) using other tools.

Secondly,  Theorem~\ref{main} easily implies Theorem~\ref{first-cor}. 
To show this, suppose $\Gamma$ is not virtually solvable, and let $G$ be 
the profinite completion of $\Gamma$. 
Note that $g_1, \ldots , g_n \in G$ freely generate a free
subgroup of $G$ if and only if $w(g_1, \ldots , g_n) \ne 1$
for every $1 \ne w \in F_n$. By Theorem~\ref{main} above, the probability
that $w(g_1, \ldots , g_n) = 1$ is $0$ for any such $w$.  As Haar measure is $\sigma$-additive,
the probability that there exists $w\neq 1$ such that $w(g_1, \ldots , g_n) = 1$
is also $0$.  Thus,
$g_1, \ldots , g_n$ freely generate a free subgroup with
probability $1$, proving Theorem~\ref{first-cor}.

Next, Theorem~\ref{main} immediately implies the following.

\begin{cor} A finitely generated linear group which satisfies
a probabilistic identity satisfies an identity.
\end{cor}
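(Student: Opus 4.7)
The plan is to derive the corollary from Theorem~\ref{main} together with the classical fact that every finitely generated virtually solvable linear group satisfies a (non-trivial) group identity. By Theorem~\ref{main}, the hypothesis that $\Gamma$ satisfies a probabilistic identity forces $\Gamma$ to be virtually solvable, so the corollary is reduced to this classical fact about virtually solvable linear groups.

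To carry out the reduction, I would first choose a solvable subgroup of finite index in $\Gamma$ and replace it by its normal core, producing a normal solvable subgroup $H$ of finite index in $\Gamma$. By Mal'cev's theorem on finitely generated solvable linear groups, $H$ contains a subgroup of finite index which, over the algebraic closure $\bar K$, is triangularizable. Intersecting this with its finitely many $\Gamma$-conjugates yields a subgroup $H'$, still of finite index in $\Gamma$ and still normal, which sits inside the upper triangular group of $\GL_n(\bar K)$. In particular $H'$ is nilpotent-by-abelian: its unipotent radical $U$ is nilpotent of class at most $n-1$, and the diagonal quotient is abelian.

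Any nilpotent-by-abelian group satisfies a non-trivial identity. Concretely, every commutator $[x,y]$ in $H'$ lies in $U$, and since $\gamma_n(U)=1$, the left-normed iterated commutator
\[
w_0(x_1,\ldots,x_{2n}) \;=\; [[x_1,x_2],[x_3,x_4],\ldots,[x_{2n-1},x_{2n}]]
\]
vanishes identically on $H'$. Finally, setting $m := [\Gamma:H']$, we have $g^m \in H'$ for every $g \in \Gamma$ by normality, so $\Gamma$ itself satisfies the non-trivial identity
\[
w_0(x_1^m,\ldots,x_{2n}^m) \;=\; 1.
\]
The real content of the corollary has already been absorbed into Theorem~\ref{main}; the remaining ingredients --- Mal'cev's structure theorem and the standard $m$-th power trick for lifting an identity from a finite-index normal subgroup to the whole group --- are routine, so I anticipate no serious obstacle beyond keeping track of the reductions.
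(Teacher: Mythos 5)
Your proof is correct, and the overall reduction is exactly the one the paper has in mind: apply Theorem~\ref{main} to conclude $\Gamma$ is virtually solvable, then observe that virtually solvable groups satisfy an identity. The paper treats the second step as immediate (see the first line of its proof of Theorem~\ref{main}: ``It is clear that every virtually solvable group satisfies an identity''), and indeed it is more elementary than your argument suggests: a solvable group $H$ of finite index has some finite derived length $d$, hence satisfies the $d$-th derived identity $\delta_d(x_1,\ldots,x_{2^d})=1$; passing to the normal core and letting $m$ be its index, $\Gamma$ satisfies $\delta_d(x_1^m,\ldots,x_{2^d}^m)=1$, which is non-trivial because $x_1^m,\ldots,x_{2^d}^m$ freely generate a free subgroup of $F_{2^d}$. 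This version needs no appeal to Mal'cev's triangularizability theorem and in fact uses no linearity at all. Your route through Mal'cev, triangularization over $\bar K$, and the nilpotent-by-abelian structure is a valid alternative and yields the slightly finer information that the identity's length can be bounded in terms of the degree $n$ of the representation rather than the derived length of the virtually solvable subgroup, but it is heavier machinery than the corollary requires.
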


It would be interesting to find out whether the same holds
without the linearity assumption. We discuss this and related
problems and applications in Section 3 below.

Our original approach to proving Theorem~\ref{main} relied on strong 
approximation
for linear groups and on establishing upper bounds on the probabilities
$P_G(w)$ where $G$ is a group satisfying $T^k \le G \le \Aut(T^k)$
for a finite simple group $T$. However, this approach is rather
long and depends on the classification of finite simple groups.
A shorter and classification-free proof of Theorem~\ref{main} is given 
in Section 2 below.

The idea is to use linearity to map $\Gamma$ into a linear algebraic
group $G$ over an infinite product $\prod_{\m} A/\m$ of finite fields.
The closure of the image is then a profinite group.  Suppose that for some Zariski-closed subset $X\subset G^n$,  the measure of 
the closure of $X(\prod_{\m} A/\m)\cap \Gamma^n$
is positive.  Unless $X$ is a union of connected components of $G^n$, by 
intersecting $X$ with a suitable translate by an element of $\Gamma^n$, 
we find a smaller closed subset with the same property.
This process cannot continue indefinitely.  The theorem is obtained by applying this to any fiber of a non-trivial word map $w$.
The actual implementation uses the language of (affine) schemes.

In fact this method of proof yields the following extension of
Theorem 2: {\em Suppose $\Gamma$ is a finitely generated linear group
which is not virtually solvable. Then all fibers in $(\widehat{\Gamma})^n$ 
of all non-trivial words $w \in F_n$  have measure $0$.}

In other words, for a finite group $G$, let $P_{G,w}$ denote the
probability distribution induced on $G$ by $w$ (so that, for $g \in G$,
$P_{w,G}(g)$ is the probability that $w(g_1, \ldots , g_n) = g$).
Its $\ell_{\infty}$-norm is defined by 
$||P_{G,w}||_{\infty} = \max_{g \in G} P_{G,w}(g)$. Then we
have:

\begin{thm} Let $\Gamma$ be a finitely generated linear group.
Suppose for some $n \ge 1$ and $1 \ne w \in F_n$ there exists
$\epsilon > 0$ such that for all finite quotients $G$ of $\Gamma$ 
we have $||P_{G,w}||_{\infty} \ge \epsilon$. Then $\Gamma$
is virtually solvable.
\end{thm}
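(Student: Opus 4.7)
My plan is to reduce Theorem 4 to Theorem 2 by extracting, from the given word $w$, a non-trivial word $w'$ which is a probabilistic identity of $\Gamma$; Theorem 2 then yields that $\Gamma$ is virtually solvable. The mechanism is a commutator trick: if $w$ concentrates probability $\ge \epsilon$ on some element $g_G$ of every finite quotient $G$, then $g_G$ must lie in a small conjugacy class, so a random element commutes with it with non-negligible probability.

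Concretely, for each finite quotient $G = \Gamma/\Delta$, choose $g_G \in G$ realizing $||P_{G,w}||_{\infty}$, so that $P_{G,w}(g_G) \ge \epsilon$. The word-map pushforward of the uniform measure on $G^n$ is a class function, since conjugation by any $h \in G$ preserves uniform measure on $G^n$ and satisfies $w(hh_1h^{-1},\ldots,hh_nh^{-1}) = h\,w(h_1,\ldots,h_n)\,h^{-1}$. Hence every $G$-conjugate of $g_G$ has fiber mass $\ge \epsilon$, and since the total mass on $G$ is $1$, the conjugacy class of $g_G$ has at most $\epsilon^{-1}$ elements, giving $|C_G(g_G)| \ge \epsilon|G|$.

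Now set $w'(x_1,\ldots,x_n,y) := [\,w(x_1,\ldots,x_n),\,y\,] \in F_{n+1}$. The reduced form $w y w^{-1} y^{-1}$ admits no cancellation at the two junctions with $y^{\pm 1}$, since $w$ does not involve $y$, so $w' \ne 1$. Conditioning on $h_1,\ldots,h_n$ and using $[a,y]=1 \iff y \in C_G(a)$, we get
\[
P_{G,w'}(1) \;=\; \sum_{g\in G} P_{G,w}(g)\,\frac{|C_G(g)|}{|G|} \;\ge\; P_{G,w}(g_G)\,\frac{|C_G(g_G)|}{|G|} \;\ge\; \epsilon^2.
\]
The bound $\epsilon^2$ is uniform in $G$, so $w'$ is a probabilistic identity of $\Gamma$, and Theorem 2 forces $\Gamma$ to be virtually solvable. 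The only thing requiring care is that $w'$ is non-trivial in $F_{n+1}$, which is immediate from reduced-word inspection; the substantive content lies entirely in Theorem 2, and the argument above is a short combinatorial upgrade from $\ell_\infty$-concentration of a word distribution to an outright probabilistic identity.
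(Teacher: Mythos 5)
Your proof is correct, and it reduces Theorem~4 to Theorem~2 by a clean combinatorial device rather than by rerunning the scheme-theoretic machinery. The key observations are all valid: the pushforward distribution $P_{G,w}$ is a class function (conjugation by $h$ preserves the uniform measure on $G^n$ and intertwines with $w$), so the element $g_G$ achieving $P_{G,w}(g_G)\ge\epsilon$ has conjugacy class of size at most $\epsilon^{-1}$, hence $|C_G(g_G)|\ge\epsilon|G|$; the word $w'=[w(x_1,\ldots,x_n),y]$ is a non-trivial reduced word in $F_{n+1}$ because $w$ does not involve $y$; and conditioning on $(h_1,\ldots,h_n)$ gives $P_{G,w'}(1)=\sum_{g}P_{G,w}(g)\,|C_G(g)|/|G|\ge\epsilon^2$ uniformly in $G$, so $w'$ is a probabilistic identity and Theorem~2 applies.

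The paper takes a different route: it proves Theorems~2 and~4 in the same pass, applying the measure-zero argument of Section~2 to a general fiber $w^{-1}(g)$ rather than only to $w^{-1}(1)$, and then noting that this also yields Theorem~4. Turning the measure-zero statement for individual fibers into the uniform $\ell_\infty$-bound of Theorem~4 requires some additional step---e.g.\ a compactness argument over the inverse system, or equivalently passing to the doubled word $w(\vec{x})\,w(\vec{y})^{-1}$ whose fiber over $1$ has measure $\sum_g P_{G,w}(g)^2\ge\|P_{G,w}\|_\infty^2$---which the paper does not spell out. Your commutator word does exactly this kind of work, and does it transparently: it converts $\ell_\infty$-concentration into an outright probabilistic identity, so that Theorem~4 becomes a \emph{formal} corollary of Theorem~2 with no further appeal to the geometry. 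What the paper's route buys is economy (one geometric argument handles both theorems and also the stronger fiber-measure statement); what your route buys is a short, self-contained deduction that makes the logical dependence on Theorem~2 explicit.
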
.

\bigskip

\section{Proof of Theorem \ref{main}}

If a group $\Gamma$ acts on a space $X$ and $Y\subseteq X$, we say $Y$ is \emph{$\Gamma$-finite} if its orbit under $\Gamma$ is finite.
We say a closed subset $Z\subseteq X$ is \emph{$\Gamma$-covered} by  $Y$ if $Z$ is a closed subset
of some finite union of $\Gamma$-translates of $Y$.

\begin{lem}
\label{room}
Let $\Gamma$ be a group acting on a set $X$.  If $Y_1,\ldots, Y_n$ are subsets of $X$ which are not $\Gamma$-finite, then there exists $g\in \Gamma$ such that
$g Y_i \neq Y_j$ for $1\le i,j\le n$.
\end{lem}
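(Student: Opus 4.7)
The plan is to recast the conclusion as saying that a certain finite union of cosets in $\Gamma$ does not exhaust $\Gamma$, and then to invoke the classical covering lemma of B.~H.~Neumann.

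For each pair $(i,j)$ with $1 \le i,j \le n$, let
\[
  S_{i,j} = \{g \in \Gamma : gY_i = Y_j\}.
\]
The statement $gY_i \neq Y_j$ for all $i,j$ is equivalent to $g \notin \bigcup_{i,j} S_{i,j}$, so it suffices to show that $\bigcup_{i,j} S_{i,j}$ is a proper subset of $\Gamma$. For each $i$, let $H_i = \Stab_\Gamma(Y_i) = \{h \in \Gamma : hY_i = Y_i\}$. A direct check shows that $S_{i,j}$ is either empty or equal to a single left coset $g_0 H_i$ (pick any $g_0 \in S_{i,j}$; then $gY_i = Y_j$ iff $g_0^{-1} g \in H_i$). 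So $\bigcup_{i,j} S_{i,j}$ is a union of at most $n^2$ left cosets of the subgroups $H_1,\dots,H_n$.

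Since $Y_i$ is not $\Gamma$-finite, the $\Gamma$-orbit of $Y_i$ is infinite, and by the orbit-stabilizer correspondence $[\Gamma:H_i] = \infty$ for each $i$. Now I invoke B.~H.~Neumann's lemma, which states that if a group $\Gamma$ is covered by finitely many left cosets of subgroups $H_1,\dots,H_n$, then at least one $H_i$ has finite index in $\Gamma$. In our situation all the $H_i$ have infinite index, so $\bigcup_{i,j} S_{i,j} \neq \Gamma$, and any $g$ in the complement does the job.

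The only real content of the argument is Neumann's covering lemma; the rest is bookkeeping in recognizing $\{g : gY_i = Y_j\}$ as a coset of $\Stab_\Gamma(Y_i)$ and translating the hypothesis ``not $\Gamma$-finite'' into ``stabilizer of infinite index.'' There is no serious obstacle; the main thing to be careful about is the asymmetry between $Y_i$ and $Y_j$ in the coset description, namely that the relevant subgroup is $\Stab_\Gamma(Y_i)$ rather than $\Stab_\Gamma(Y_j)$, so the number of distinct subgroups that appear in the cover is at most $n$, each with infinite index.
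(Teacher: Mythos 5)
Your proof is correct and follows exactly the same approach as the paper: recognize each $\{g : gY_i = Y_j\}$ as empty or a left coset of $\Stab_\Gamma(Y_i)$, observe that the stabilizers have infinite index since the $Y_i$ are not $\Gamma$-finite, and conclude via B.~H.~Neumann's covering lemma. Yours simply spells out the bookkeeping that the paper leaves implicit.
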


\begin{proof}
For given $i,j$, the set of $g$ such that $g Y_i = Y_j$ is either empty or is a left coset of the stabilizer of $Y_i$ in $\Gamma$.  
By a theorem of B.\ H.\ Neumann \cite{N},
a group cannot be $\Gamma$-covered by a finite collection of left cosets of subgroups of infinite index.
\end{proof}

\begin{prop}
\label{measure}

Let $X$ be a Noetherian topological space and $\Gamma$ a group of homeomorphisms $X\to X$.
Let $f$ denote a function from the set of closed subsets of $X$ to $[0,1]$ satisfying the following  conditions:
\begin{enumerate}
\item[I.] If $Z\subseteq Y$ are closed subsets of $X$, then $f(Z)\le f(Y)$.
\item[II.] For all closed $Y\subseteq X$ and all $ g\in \Gamma$ such that $f(Y\cap  g Y)=0$, we have
$$f(Y\cup  g Y) \ge 2 f(Y).$$
\end{enumerate}
If $Y\subseteq X$ is closed and $\Gamma$-covers some closed subset $W\subseteq X$ with
$f(W) > 0$, then it $\Gamma$-covers some
closed $\Gamma$-stable subset $Z\subseteq X$ with $f(Z) > 0$.
 
 \end{prop}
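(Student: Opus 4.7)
The plan is to exploit the Noetherian descending chain condition to extract a minimal element of
\[
\mathcal C := \{W' \subseteq X \text{ closed}: W' \text{ is } \Gamma\text{-covered by } Y \text{ and } f(W') > 0\},
\]
which is nonempty since $W \in \mathcal C$, and then to show this minimal element has a finite $\Gamma$-orbit, so that $Z$ may be taken as the union of the orbit. Call the minimal element $W^*$. The first step is a dichotomy: for every $g \in \Gamma$, either $gW^* = W^*$ or $f(W^* \cap gW^*) = 0$. Indeed, if $W^* \subseteq gW^*$ then the descending chain $W^* \supseteq g^{-1}W^* \supseteq g^{-2}W^* \supseteq \cdots$ must stabilize by the Noetherian property, forcing $gW^* = W^*$; otherwise $W^* \cap gW^* \subsetneq W^*$ is a closed subset still $\Gamma$-covered by $Y$, and minimality in $\mathcal C$ forces $f(W^* \cap gW^*) = 0$.

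With the dichotomy, the goal reduces to showing $H := \Stab(W^*)$ has finite index in $\Gamma$: in that case $Z := \bigcup_{gH \in \Gamma/H} gW^*$ is a finite, hence closed, union, is $\Gamma$-stable and $\Gamma$-covered by $Y$, and satisfies $f(Z) \geq f(W^*) > 0$ by Condition~I. To rule out an infinite orbit, I would first pass to the $\Gamma$-invariantization $\tilde f(Z) := \sup_{h \in \Gamma} f(hZ)$, which still satisfies Conditions~I and~II (Condition~II for $\tilde f$ follows by applying the original II to $hY$ and $hgh^{-1}$), and for which the conclusion for $\tilde f$ implies that for $f$ because $\tilde f(Z) = f(Z)$ whenever $Z$ is $\Gamma$-stable; so we may assume $f$ itself is $\Gamma$-invariant. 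Then distinct translates of $W^*$ share the common value $f(W^*) > 0$ and have pairwise intersections of $f$-measure zero. Under the assumption of an infinite orbit, one iterates Condition~II along a doubling $U_0 := W^*$, $U_{k+1} := U_k \cup h_{k+1} U_k$, where $h_{k+1} \in \Gamma$ is chosen via Lemma~\ref{room} and Neumann's theorem on finite unions of cosets of infinite-index subgroups, so that $h_{k+1}$ sends each translate of $W^*$ appearing in $U_k$ to a new coset-distinct one. If $f(U_k \cap h_{k+1} U_k) = 0$ at each step, Condition~II yields $f(U_{k+1}) \geq 2 f(U_k)$, whence $f(U_k) \geq 2^k f(W^*)$, eventually exceeding $1$ — a contradiction.

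The technical heart is verifying $f(U_k \cap h_{k+1} U_k) = 0$ at each iteration: this intersection decomposes as a finite union of pairwise intersections of distinct translates of $W^*$, each individually of $f$-measure zero by $\Gamma$-invariance and the dichotomy, but since sub-additivity of $f$ is not among the hypotheses, the vanishing on the whole union must be forced combinatorially. The key point is that Lemma~\ref{room} produces elements of $\Gamma$ outside any prescribed finite union of cosets of infinite-index subgroups (conjugates of $H$), and this flexibility is precisely what is needed to choose the $h_k$'s so that the decomposition of $U_k \cap h_{k+1} U_k$ aligns with Condition~II and the iterative doubling carries through to the contradiction.
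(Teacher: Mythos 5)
Your overall strategy — extract a minimal element, prove a dichotomy, and iterate Condition~II to force an impossible doubling — is in the same spirit as the paper's proof (which also uses Lemma~\ref{room}, Neumann's covering theorem, and the doubling in Condition~II). But there is a genuine gap, and it is exactly the one you flag: you cannot establish $f(U_k\cap h_{k+1}U_k)=0$ from the fact that this set is a finite union of pairwise intersections of distinct translates of $W^*$, each individually $f$-null. The hypotheses give no subadditivity of $f$ whatsoever, and no amount of cleverness in choosing the $h_k$ changes the shape of $U_k\cap h_{k+1}U_k$: it is always a union of $4^k$ sets of the form $g W^*\cap g'W^*$ with $gW^*\neq g'W^*$, and Condition~II simply does not let you conclude that such a union is $f$-null. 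So "forced combinatorially" is wishful; the step would fail as written.

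The paper sidesteps this by placing minimality on the \emph{covering} set $Y$, not on the covered set $W$. Choosing $Y$ minimal among closed sets that $\Gamma$-cover something of positive $f$-value, one gets for free that \emph{any} set $\Gamma$-covered by a proper closed subset of $Y$ is $f$-null — no subadditivity needed. Decomposing $Y$ into irreducible components and isolating one non-$\Gamma$-finite component $Y_0$, the paper shows $Z\cap gZ$ (for a single carefully chosen $g$, with $Z=g_1Y\cup\cdots\cup g_rY$) is $\Gamma$-covered by $Y'\cup\bigcup_{i,j}(Y_0\cap g_i^{-1}gg_jY_0)$, which is \emph{strictly} contained in $Y$ because $Y_0$ is irreducible and cannot be covered by finitely many of its proper closed subsets. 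Minimality of $Y$ then gives $f(Z\cap gZ)=0$ in one stroke, and a single application of Condition~II to a $Z$ chosen to realize more than half the supremum of $f$ over finite unions of $\Gamma$-translates of $Y$ yields the contradiction — no iteration, and no union-of-null-sets issue ever arises.

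Your $\Gamma$-invariantization trick $\tilde f(Z)=\sup_h f(hZ)$ is a nice observation (and correctly verified to preserve Conditions~I and~II and to reduce to the invariant case), but it doesn't repair the gap: $\Gamma$-invariance makes the pairwise intersections null, which you already have, and still doesn't control their union. To fix your proof you would need to replace minimality of the covered set by minimality of the covering set and bring in the irreducibility argument, at which point you essentially reproduce the paper's proof.
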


\begin{proof}
By the Noetherian hypothesis, we may assume without loss of generality that $Y$ is minimal for the property of 
$\Gamma$-covering a set of positive $f$-value.
If two distinct irreducible components $Y_i$ and $Y_j$ of $Y$ were $\Gamma$-translates of one another, we could replace $Y$ with the union 
of all of its components except $Y_j$, and the resulting closed set would still $\Gamma$-cover a set of positive $f$-value.  This is impossible by the minimality of $Y$.

If $Y$ is $\Gamma$-finite, then 
$$Z := \bigcup_{g\in \Gamma} g Y$$
is a $\Gamma$-stable finite union of $\Gamma$-translates of $Y$ containing $W$.  By condition I, it satisfies $f(Z)>0$, so we are done. 
As $Y$ as a finite union of irreducible components, we may therefore assume at least one such component $Y_0$ is not $\Gamma$-finite.
We write  $Y = Y_0\cup Y'$, 
where no $\Gamma$-translate of $Y'$ contains $Y_0$.

By condition I,
there exists a finite sequence $g_1,\ldots,g_r\in \Gamma$ 
such that $f(Z) > 0$ for
$$Z := g_1 Y\cup \cdots \cup g_r Y.$$
We choose the $g_i$  so that
\begin{equation}
\label{halfplus}
f(Z) > \frac{\sup_{\Delta\subsetneq \Gamma\,\mathrm{ finite}}f\Bigl(\bigcup_{g\in \Delta} g Y \Bigr)}2.
\end{equation}

As no $\Gamma$-translate of $Y_0$ is $\Gamma$-finite, Lemma~\ref{room} implies there exists $g$ such that $g_i Y_0 \neq g g_j Y_0$ for all $i,j$.
Thus,
$$Y' \cup \bigcup_{i,j} (Y_0\cap g_i^{-1} g g_j Y_0)\subsetneq Y$$
$\Gamma$-covers $Z\cap g Z$.  By the minimality of $Y$, this means $f(Z\cap g Z) = 0$.
By condition II, $f(Z\cup gZ) \ge 2 f(Z)$, which contradicts (\ref{halfplus}).  We conclude that $Z$ must be $\Gamma$-finite.

\end{proof}

Now, let $A$ be an integral domain finitely generated over $\Z$ with fraction field $K$.  Let $\cG = \Spec B$ be an affine group scheme of finite type over $A$.
As usual, for every commutative $A$-algebra $T$, $\cG(T)$ will denote the set of $\Spec T$-points of $\cG$, i.e., the set of $A$-algebra homomorphisms $B\to T$.
The group structure on $\cG$ makes each $\cG(T)$ a group, functorially in $T$.  We regard $\cG$ as a topological space with respect to its Zariski topology.
If $Y\subseteq \cG$ is a closed subset, we  define
$Y(T)$ to be the subset of $\cG(T)$ consisting of $A$-homomorphisms $B\to T$ such that the corresponding map of topological spaces 
$\Spec T\to \cG$ sends $\Spec T$ into a subset of $Y$.  If $Z \subseteq \cG$ is another closed subset, then 
$$(Y\cap Z)(T) = Y(T) \cap Z(T),$$
but in general, the inclusion
$$Y(T) \cup Z(T)\subseteq (Y\cup Z)(T)$$
need not be an equality.

We define
$$P(\cG,A) := \prod_{\m \in \mathrm{Maxspec}(A)} \cG(A/\m),$$
where $\mathrm{Maxspec}$ denotes the set of maximal ideals, and $P(\cG,A)$ is
endowed with the product topology.  Note that as $\cG$ is of finite type (i.e.,  $B$ is a finitely generated $A$-algebra) and every $A/\m$ is a field finitely generated over $\Z$ (and hence finite), it follows that each $\cG(A/\m)$
is finite and $P(\cG,A)$ is a profinite group.  
For any closed subset $X\subseteq \cG$, we define the closed subset
$$P(X,A) := \prod_{\m \in \mathrm{Maxspec}(A)} X(A/\m)\subseteq P(\cG,A).$$

\begin{lem}
\label{non-generic}
If $X\subseteq \cG$ does not meet the generic fiber $\Spec B\otimes_A K\subset \cG$, then $P(X,A)$ is empty.
\end{lem}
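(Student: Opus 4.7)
My plan is to translate the geometric hypothesis into the existence of a nonzero element of $A$ lying in the defining ideal of $X$, and then to use the Jacobson property of $A$ to locate a maximal ideal $\m$ for which $X(A/\m)$ must be empty; a single empty factor kills the entire product $P(X,A)$.

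Concretely, I would first write $X=V(I)$ for some ideal $I\subseteq B$. The statement that $X$ does not meet the generic fiber $\Spec(B\otimes_A K)$ becomes the algebraic condition that the extension $I\cdot(B\otimes_A K)$ is the unit ideal in $B\otimes_A K$. Since $B\otimes_A K$ is the localization of $B$ at the multiplicative set $A\setminus\{0\}$, clearing denominators in an expression of the form $1=\sum b_j i_j/s_j$ with $b_j\in B$, $i_j\in I$, $s_j\in A\setminus\{0\}$ produces a nonzero element $s\in A$ whose image in $B$ lies in $I$.

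With such an $s$ in hand, I would invoke the fact that a finitely generated $\Z$-algebra is a Jacobson ring: since $A$ is also an integral domain, the intersection of its maximal ideals is $(0)$, so there exists some $\m\in\mathrm{Maxspec}(A)$ with $s\notin\m$. For this $\m$, any element of $X(A/\m)$ would be an $A$-algebra homomorphism $\phi:B\to A/\m$ with $\phi(I)=0$; but $\phi(s)=s\bmod\m\neq 0$ by $A$-linearity, a contradiction. Hence $X(A/\m)=\emptyset$, and this forces $P(X,A)=\prod_{\m'}X(A/\m')$ to be empty.

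I do not anticipate any real obstacle: the argument is essentially an unpacking of the definitions combined with the Jacobson property of finitely generated $\Z$-algebras. The only mildly subtle point is the passage from $1\in I\cdot(B\otimes_A K)$ to an $s\in I\cap A$, but this is immediate from the description of $B\otimes_A K$ as a localization of $B$ along $A\setminus\{0\}$.
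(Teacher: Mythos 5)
Your proof is correct and follows the same strategy as the paper's: extract a nonzero element of $A$ lying in the defining ideal $I$ of $X$ by clearing denominators, then produce a maximal ideal of $A$ avoiding that element to kill one factor of the product. The only difference is cosmetic: you cite the Jacobson property of finitely generated $\Z$-algebras to find the maximal ideal directly, whereas the paper unwinds exactly that fact by passing to $A[1/f]$, taking a maximal ideal there, and using finiteness of residue fields of finitely generated $\Z$-algebras to see that its contraction to $A$ is still maximal.
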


\begin{proof}
If $I\subseteq B$ is the ideal defining $X$, then $(B/I)\otimes_A K = 0$, so $I\otimes_A K = B\otimes_A K$.  It follows that there exist elements $b_i\in I$ and $a_i/a'_i\in K$ such that
$$\sum_i b_i\otimes \frac{a_i}{a'_i} = 1,$$
and clearing denominators, we see that 
some non-zero element $f := \prod_i a'_i\in A$
belongs to $I$.  If $\m$ is a maximal ideal of $A[1/f]$, then $A[1/f]/\m$ is a field finitely generated over $\Z$, hence a finite field, 
and therefore $\m\cap A$ is a maximal ideal of $A$.  Thus, the image of $f$ in $A/(\m\cap A)$ is non-zero, from which it follows that
there are no $A$-homomorphisms $B/I\to A/(\m\cap A)$, i.e., $X(A/(\m\cap A))=\emptyset$.
\end{proof}

For any subgroup $\Gamma\subseteq \cG(A) \subseteq P(\cG,A)$, we define $\bar \Gamma$ to be the closure of $\Gamma$ in $P(\cG,A)$.  This is a closed subgroup of a profinite group and therefore a profinite group itself.  We endow it with Haar measure $\mu_{\bar \Gamma}$ normalized so that $(\bar\Gamma,\mu_{\bar \Gamma})$
is a probability space.  In particular, 
left translation by $\Gamma$ is a continuous measure-preserving action on
$(\bar\Gamma,\mu_{\bar \Gamma})$.  As Haar measure is outer regular, for every Borel set $B$,
$$\mu_{\bar \Gamma}(B)  = \inf_{S\subseteq \mathrm{Maxspec}(A)} \frac{|\pr_S B|}{|\pr_S \Gamma|},$$
where $S$ ranges over all finite sets of maximal ideals of $A$ and $\pr_S$ denotes projection onto $\prod_{\m\in S} \cG(A/\m)$.

For any positive integer $n$, we let $\cG^n$ denote the $n$th fiber power of $\cG$ relative to $A$, i.e., defining
$$B_n := \underbrace{B\otimes_A B\otimes_A\cdots\otimes_A B}_n,$$
we define $\cG^n := \Spec B_n$, regarded as a topological space with respect to the Zariski topology.  
Note that in general the Zariski topology on $\cG^n$ is \emph{not} the product topology.  However, by the universal property of tensor products, $\cG^n(T)$
is canonically isomorphic to $\cG(T)^n$ for all commutative $A$-algebras $T$.  Moreover, $B_n$ is a finitely generated $\Z$-algebra, and by the Hilbert basis theorem, this
implies that $\cG^n$ is a Noetherian topological space.

We consider the closure $\bar\Gamma^n$ of $\Gamma^n$ in $P(\cG^n,A)$.
For any closed subset $Y\subseteq \cG^n$, we define
$$P_\Gamma(Y) := \bar\Gamma^n\cap P(Y,A).$$
Thus, if $Y$ and $Z$ are closed subsets of $\cG^n$,
$$P_\Gamma(Y\cap Z) = \bar\Gamma^n\cap P(Y\cap Z, A) = \bar\Gamma^n\cap (P(Y,A)\cap P(Z,A)) 
= P_\Gamma(Y)\cap P_\Gamma(Z).$$
As
$$P(Y\cup Z,A) = \prod_{\m\in\mathrm{Maxspec}(A)} (Y(A/\m)\cup Z(A/\m)) \supseteq P(Y,A)\cup P(Z,A),$$
we have
$$P_\Gamma(Y\cup Z)\supseteq P_\Gamma(Y)\cup P_\Gamma(Z).$$
Defining
$$f(Y) := \mu_{\bar\Gamma^n}(P_\Gamma(Y)),$$
condition I  of Proposition~\ref{measure} is obvious.
As $\mu_{\bar\Gamma^n}$ is a measure, if $f(Y\cap Z) = 0$, then
\begin{align*}
f(Y\cup Z)& = \mu_{\bar\Gamma^n}(P_\Gamma(Y\cup Z))  \\
&\ge \mu_{\bar\Gamma^n}(P_\Gamma(Y)\cup P_\Gamma(Z))  \\
& =  \mu_{\bar\Gamma^n}(P_\Gamma(Y)) +  \mu_{\bar\Gamma^n}(P_\Gamma(Z)) -  \mu_{\bar\Gamma^n}(P_\Gamma(Y)\cap P_\Gamma(Z)) \\
&= f(Y) + f(Z) - f(Y\cap Z)= f(Y) + f(Z).
\end{align*}
As $\mu_{\bar\Gamma^n}$ is $\Gamma^n$-invariant, this implies condition II.  

\begin{prop}
\label{finite-orbit}
Let $G$ denote a linear algebraic group over a field $K$.
If $\Gamma$ is Zariski dense in $G(K)$, then a non-empty closed subset $Y$ of $G^n$  is $\Gamma^n$-finite
if and only if it is a union of connected components of $G^n$.
\end{prop}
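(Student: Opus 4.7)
The plan is to prove the two directions separately. The backward direction is immediate: $G^n$, being an algebraic group of finite type, has only finitely many connected components, and left translation by any element of $G^n$ permutes them. Hence there are only finitely many subsets of $G^n$ expressible as a union of connected components, so any such subset is automatically $\Gamma^n$-finite.

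For the forward direction, the strategy is to pass from the set-theoretic stabilizer $\Stab_{\Gamma^n}(Y) := \{\gamma \in \Gamma^n : \gamma Y = Y\}$, which has finite index in $\Gamma^n$ as soon as $Y$ is $\Gamma^n$-finite, to the algebraic stabilizer $H := \{g \in G^n : gY = Y\}$, and then to invoke Zariski density. The preparatory observation is that $H$ is a closed subgroup of $G^n$: the transporter $\{g \in G^n : gY \subseteq Y\}$ equals the intersection $\bigcap_{y \in Y} Yy^{-1}$ of closed subvarieties, hence is closed, and $H$ is the intersection of this transporter with its image under inversion.

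Assuming $Y$ is $\Gamma^n$-finite, we have $\Stab_{\Gamma^n}(Y) \subseteq H$, hence $\overline{\Stab_{\Gamma^n}(Y)} \subseteq H$. Writing $\Gamma^n$ as a finite disjoint union of cosets of $\Stab_{\Gamma^n}(Y)$ and taking Zariski closures---using that $\Gamma^n$ is Zariski dense in $G^n$, which follows from the Zariski density of $\Gamma$ in $G$---expresses $G^n$ as a finite union of translates of $\overline{\Stab_{\Gamma^n}(Y)}$. Thus $\overline{\Stab_{\Gamma^n}(Y)}$ is a finite-index closed subgroup of $G^n$, and intersecting with the connected component $(G^n)^0$ yields a finite-index closed subgroup of $(G^n)^0$; any such subgroup must equal $(G^n)^0$ itself, since otherwise its finitely many proper closed cosets would decompose the irreducible variety $(G^n)^0$. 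Therefore $(G^n)^0 \subseteq H$, so $Y$ is stable under left translation by $(G^n)^0$, i.e., $Y$ is a union of left $(G^n)^0$-cosets, which are precisely the connected components of $G^n$.

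The main point requiring care is the closedness of $H$; everything else is standard algebraic-group formalism about Zariski closures of finite-index subgroups of Zariski-dense subgroups and about orbits of the identity component acting on itself by translation.
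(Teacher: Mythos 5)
Your proof is correct and follows essentially the same route as the paper: pass from the finite-index stabilizer $\Delta = \Stab_{\Gamma^n}(Y)$ to its Zariski closure $D$, show $D$ has finite index in $G^n$ and hence contains $(G^n)^\circ$, and conclude that $Y$ is a union of $(G^n)^\circ$-cosets. The one place you are noticeably more explicit than the paper is in verifying that the algebraic stabilizer $H = \{g \in G^n : gY = Y\}$ is Zariski closed (via the transporter $\bigcap_{y \in Y} Yy^{-1}$ intersected with its image under inversion) --- this is exactly the step needed to upgrade $\Delta \subseteq H$ to $\overline{\Delta} \subseteq H$, and the published proof leaves it implicit.
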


\begin{proof}
If $Y$ is $\Gamma^n$-finite,  its stabilizer $\Delta$ is of finite index in $\Gamma^n$, which implies
that the Zariski closure $D$ of $\Delta$ in $G^n$ has finite index in $G^n$.  Thus $D\cap (G^n)^\circ$ is of finite index in $(G^n)^\circ$.
As $(G^n)^\circ$ is connected, it follows that $D$ contains $(G^n)^\circ$.  The Zariski-closure of any left coset of $\Gamma^n$ is a left coset of $D$
and therefore a union of cosets of $(G^n)^\circ$.  Conversely, any left translate of a coset of $(G^n)^\circ$ is again such a coset, so the orbit of any
union of connected components of $G^n$ is finite.
\end{proof}

\begin{prop}
\label{nonconstant}
Let $K$ be a field and $\uG$  a reductive algebraic group over $K$ with adjoint semisimple identity component.  
Let $w\in F_n$ be a non-trivial word and $g_0\in \uG(K)$.
Then $w^{-1}(g_0)$ does not contain any connected component of $\uG^n$.
 \end{prop}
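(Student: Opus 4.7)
The strategy is proof by contradiction. Suppose $w^{-1}(g_0)$ contains a connected component $C$ of $\uG^n$, and write $H := \uG^\circ$ and $C = \vec a \cdot H^n$ for some $\vec a = (a_1,\ldots,a_n) \in \uG^n$; the hypothesis becomes $w(a_1 h_1, \ldots, a_n h_n) = w(\vec a)$ for every $\vec h \in H^n$.

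First I would unwind this into a twisted-word identity on $H^n$. Writing $w = x_{i_1}^{\e_1}\cdots x_{i_m}^{\e_m}$ (reduced) and iterating $A\cdot h = \mathrm{Ad}(A)(h)\cdot A$ to push each $h_{i_j}$ leftward past all preceding $a$-letters, one obtains the identity
\[
w(\vec a\cdot \vec h) \;=\; V(\vec h) \cdot w(\vec a),\qquad V(\vec h) \;:=\; \prod_{j=1}^m \mathrm{Ad}(B_j)\bigl(h_{i_j}^{\e_j}\bigr),
\]
where $B_j \in \uG$ is an explicit prefix product of $a_{i_l}^{\pm 1}$'s (namely $B_j = A_j$ if $\e_j = +1$ and $B_j = A_{j-1}$ if $\e_j = -1$, with $A_j := a_{i_1}^{\e_1}\cdots a_{i_j}^{\e_j}$). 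Since $H$ is normal in $\uG$, each $\mathrm{Ad}(B_j)$ restricts to an automorphism of $H$, so $V$ is a morphism $H^n \to H$, and the hypothesis is equivalent to $V \equiv 1$ on $H^n$.

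The core step is to exhibit $\vec h^* \in H(\bar K)^n$ with $V(\vec h^*) \ne 1$, yielding a contradiction. Viewing $V(\vec h^*) = \prod_j B_j\, h_{i_j}^{*\,\e_j}\, B_j^{-1}$ as an element of $\uG(\bar K)$, I would choose $\vec h^*$ so that the subgroup of $\uG(\bar K)$ generated by $h_1^*,\ldots,h_n^*$ together with the finitely many fixed constants $B_1,\ldots,B_m$ is isomorphic to the free product $\langle h_1^*,\ldots,h_n^*\rangle * \langle B_1,\ldots,B_m\rangle$. Since $H$ is non-trivial adjoint semisimple, $H(\bar K)$ contains Zariski-dense non-abelian free subgroups (Tits alternative / Borel's theorem), and a standard ping-pong argument applied to generic free generators of such a subgroup against the $B_j$'s supplies the required free product. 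Inside this free product, $V(\vec h^*)$ is an alternating product of $\uG$- and $F_n$-letters; after collapsing any $B_j^{-1}B_{j+1}=1$ (which forces two adjacent $h$-letters to multiply in $F_n$), the resulting reduced alternating form is non-empty, precisely because $w \in F_n$ is reduced and non-trivial. Hence $V(\vec h^*) \ne 1$ in $\uG(\bar K)$, contradicting $V \equiv 1$.

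The main obstacle is the ping-pong construction: arranging $\vec h^*$ in sufficiently general position that $\langle \vec h^*, B_1,\ldots,B_m\rangle \subset \uG(\bar K)$ splits as a free product. The adjoint semisimple hypothesis on $\uG^\circ$ is essential here --- it ensures that $\mathrm{Ad}:\uG\to\mathrm{Aut}(H)$ has finite kernel $Z_{\uG}(H)$, so the $B_j$'s act on $H$ essentially faithfully, and this combined with the Zariski-dense free subgroups in $H$ provides enough non-commutativity for classical Tits ping-pong on a faithful linear representation of $\uG$ to succeed.
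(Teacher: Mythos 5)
Your approach is genuinely different from the paper's, and I do not believe the key step goes through as stated.

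You correctly reduce the hypothesis to the identity $V\equiv 1$ on $H^n$, where $V(\vec h)=\prod_j \mathrm{Ad}(B_j)(h_{i_j}^{\e_j})$ is a ``twisted word map'' with fixed conjugating constants $B_j\in\uG(\bar K)$. Your collapsing analysis is also sound: if one had the free product $\langle \vec h^*\rangle * \langle B_1,\dots,B_m\rangle$, then writing $V(\vec h^*)=c_0 h_{i_1}^{*\e_1}c_1\cdots h_{i_m}^{*\e_m}c_m$ with $c_j=B_j^{-1}B_{j+1}$, the chunks of $h^*$-letters that merge across trivial $c_j$'s correspond to reduced subwords of $w$ and hence never cancel, so the reduced alternating form is non-empty. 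The gap is the free product claim itself. The $B_j$'s are \emph{forced} on you by the (hypothetical) $\vec a$; you cannot choose them, and ``a standard ping-pong argument'' against arbitrary fixed elements is not available. For instance, $\langle B_1,\dots,B_m\rangle$ may be Zariski-dense in $\uG^\circ(\bar K)$ (so there is no ``room'' for ping-pong in the dynamical sense), and over a general algebraically closed field there is no hyperbolic geometry at hand to make a concrete contraction argument. The claim that, for every finitely generated subgroup $\Lambda\subseteq\uG(\bar K)$, one can find generic $\vec h^*\in H(\bar K)^n$ with $\langle\vec h^*\rangle*\Lambda\hookrightarrow\uG(\bar K)$ via the given embedding of $\Lambda$ is a strong and unproven assertion, and in fact it is not needed for the proposition.

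The paper sidesteps this entirely by exploiting the one freedom you did not use: the coset representatives $a_i$ may be replaced by any other element of $a_i\uG^\circ$. Using the highest root of a fixed Borel, the paper isolates a rank-one semisimple subgroup $\uG_\alpha\subseteq\uG^\circ$ of type $A_1$ whose centralizer meets \emph{every} connected component of $\uG$. One may therefore assume all $a_i$ centralize $\uG_\alpha$, whence $\mathrm{Ad}(B_j)$ is trivial on $\uG_\alpha$, and restricting $V$ to $\uG_\alpha^n$ gives the ordinary word map $w$ on $\uG_\alpha^n$, which is non-constant by Borel's theorem. That reduction replaces your unproven ping-pong with a clean structural fact about reductive groups; I would recommend incorporating the freedom to re-choose representatives, since without it the obstacle you yourself flag (``arranging $\vec h^*$ in sufficiently general position'') is, as far as I can tell, not resolvable.
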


\begin{proof}
Without loss of generality, we assume $K$ algebraically is closed.
Let $\uG^\circ$ be the identity component, $\uT$ a maximal torus of $\uG^\circ$ and $\uB$ a Borel subgroup of $\uG^\circ$
containing $\uT$.  Let $\Phi$ be the root system of $\uG$ with respect to $\uT$, and let $\Phi^+$ denote the set of roots of
$\uB$ with respect to $\uT$.
Every maximal torus of $\uG^\circ$ is conjugate under $\uG^\circ(K)$ to $\uT$.
The Weyl group $N_{\uG}(\uT)/\uT$ acts transitively on the set of Weyl chambers, so every pair $\uT'\subset \uB'$ is conjugate to $\uT\subset \uB$
by some element of $\uG^\circ(K)$.  In particular, for any $g\in \uG(K)$, the pair $g^{-1} \uT g\subset g^{-1}\uB g$ is conjugate in $\uG^\circ(K)$ to $\uT\subset \uB$,
or equivalently, there is some element $h\in g\uG^\circ(K)$ such that conjugation by $h$ stabilizes $\uT$ and $\uB$.
The highest root $\alpha$ of $\Phi^+$ is determined by $\uB$,
so $h$ likewise preserves $\alpha$.  It therefore normalizes $\ker\alpha^\circ$, and therefore the derived group $\uG_\alpha$ of
the centralizer of $\ker\alpha^\circ$.  This group is semisimple and of type $A_1$, so every element that normalizes it acts by an inner automorphism.  It follows that the centralizer of $\uG_\alpha$ in $\uG$ meets every connected component of $\uG$.

Suppose now that $w$ is constant on $g_1\uG^\circ\times\cdots\times g_n\uG^\circ$ for some $g_1,\ldots,g_n\in \uG(K)$.
Without loss of generality we may assume that all $g_i$ centralize $\uG_\alpha$.  As $w$ is constant on $g_1\uG_\alpha\times\cdots\times g_n\uG_\alpha$, and as
$$w(g_1h_1,\ldots,g_nh_n) = w(g_1,\ldots,g_n)w(h_1,\ldots,h_n)$$
for all  $h_1,\ldots,h_n\in \uG_\alpha(K)$, it follows that $w$ is constant on $\uG_\alpha^n$.  This is impossible because non-trivial words give non-trivial word maps on
all semisimple algebraic groups \cite{Borel}.
\end{proof}

We can now prove Theorem~\ref{main}.

\begin{proof}
It is clear that every virtually solvable group satisfies an identity and therefore satisfies a probabilistic identity.
For the other direction, we
fix a faithful representation $\Gamma\to \GL_k(F)$, where $F$ is a field, which we may assume algebraically closed.
Let $H\subset \GL_{k}$ denote the Zariski closure of $\Gamma$ in $\GL_k$.
Let $G$ denote the quotient of $H$ by its maximal solvable normal subgroup.  As $\Gamma$ is not virtually solvable, the same is true of $G$, so $H$ is not finite.  Its identity component is therefore a non-trivial adjoint semisimple group.

As $G$ is a linear algebraic group, we can regard it as a closed subgroup of $\GL_r$ for some $r$, so we have a homomorphism
$\rho\colon \Gamma\to \GL_r(F)$ such that the Zariski closure of $\rho(\Gamma)$ is $G$.
We recall how to extend $G$ to a subgroup scheme of $\GL_r$ defined over a finitely generated $\Z$-algebra. 
Let 
$$R_{\Z,r} := \Z[x_{ij},y] _{i,j=1,\ldots,r}/ (y \det(x_{ij}) - 1)$$
denote the coordinate ring of $\GL_r$ over $\Z$, and let 
$$\Delta_{\Z,r}\colon R_{\Z,r}\to R_{\Z,r}\otimes_{\Z} R_{\Z,r},$$
$$S_{\Z,r}\colon R_{\Z,r}\to R_{\Z,r},$$
and
$$\epsilon_{\Z,r}\colon R_{\Z,r}\to \Z$$
denote the ring homomorphisms associated to the
multiplication, inverse, and unit maps.  Closed subschemes of $\GL_r$ over any commutative ring  $A$ are in one-to-one correspondence with  ideals $I$ of $R_{A,r} := A\otimes_{\Z} R_{\Z,r}$, and
such an ideal defines a group subscheme if and only $I$ is a Hopf ideal \cite[\S2.1]{Waterhouse}, i.e., if and only if it satisfies the following three conditions:
$$\Delta_{A,r}(I)\subseteq I\otimes_A R_{A,r} + R_{A,r}\otimes_A I,$$
$$S_{A,r}(I) \subseteq I,$$
$$\epsilon_{A,r}(I) = \{0\}.$$

We fix a finite set of generators $h_k$ of the ideal $I_F$ in $R_{F,r}$ associated to $G$ as a closed subvariety
of $\GL_r$ over $F$.  
We lift each $h_k$ to an element $\tilde h_k\in F[x_{ij},y]$.  For any subring $A\subseteq F$ such that $\tilde h_k \in A [x_{ij},y]$, we denote again
by $h_k$  the image of $\tilde h_k$ in $R_{A,r}$; this should not cause confusion.
Let $A_0$ denote the subring of $F$ generated by all matrix entries in $\GL_r(F)$ of  the $\rho( g_j)$, as $g_j$ runs over some finite generating set of $\Gamma$,
together with all coefficients of the $\tilde h_k$.  Let $I_0$ denote the ideal generated by the elements $h_k$ in $R_{A_0,r}$, and let 
$K$ denote the fraction field of $A_0$.  As 
$$\Delta_{A_0,r}(I_0)\subseteq I_0\otimes_{A_0}R_{K,r}+R_{K,r}\otimes_{A_0} I_0$$
and
$$S_{A_0,r}(I_0)\subseteq I_0\otimes_{A_0}R_{K,r},$$
there exists $f\in A_0$ such that
$$\Delta_{A_0,r}(h_i)\in  I_0\otimes_{A_0}R_{A_0[1/f],r}+R_{A_0[1/f],r}\otimes_{A_0} I_0$$
and
$$S_{A_0,r}(h_i) \in I_0\otimes_{A_0} A_0[1/f]$$
for all $i$, and therefore, setting $A := A_0[1/f]$ and $I := I_0\otimes_{A_0} A$, we have
that $I$ is a Hopf ideal of $R_{A,r}$.
We set $\cG:= \Spec R_{A,r}/I$, the closed group subscheme of $\GL_r$ over $A$ defined by $h_k\in R_{A,r}$.
By construction, $\rho(\Gamma)$ is a Zariski-dense finitely generated subgroup of $\cG(A)$.

Let $Y := w^{-1}(g)\subseteq \cG^n$.  We define $f$ as above.  If $f(Y) > 0$, then $Y$ $\Gamma$-covers a set of positive $f$-value, so by
Proposition~\ref{measure}, $Y$ $\Gamma$-covers a closed $\Gamma$-stable subset $Z$
with $f(Z)>0$.  By Lemma~\ref{non-generic}, $ Z$ must meet the generic fiber $G^n$ of $\cG^n$, which implies that $Y$ must meet the generic fiber.
Proposition~\ref{finite-orbit} now implies that $Z \cap G^n$ contains a connected component of $G^n$, and it follows that $Y\cap G^n$ contains a connected
component.  However, this is impossible by Proposition~\ref{nonconstant}.  
Thus, $f(Y) = 0$.

Therefore, for every $\epsilon$, there exists a finite set $S$ of maximal ideals of $A$ such that
$$\frac{|\pr_S w^{-1}(g)|}{\pr_S \Gamma^n} < \epsilon.$$
Defining $\Delta$ to be the kernel of $\pr_S$, we complete the
proof of Theorem \ref{main} (as well as of Theorem 4).

\end{proof}

\bigskip

\section{Open problems}

In this section we discuss related open problems concerning
finite and residually finite groups. 

\begin{prob}  
\label{open1}
Do all finitely generated residually finite groups which satisfy
a probabilistic identity satisfy an identity?
\end{prob}

We also pose a related, finitary version of Problem~\ref{open1}.

\begin{prob}  
\label{open2}
Is it true that, for any word $1 \ne w \in F_n$, any positive
integer $d$ and any real number $\e > 0$, there exists a word
$1 \ne v \in F_m$ (for some $m$) such that, if $G$ is a finite 
$d$-generated group satisfying $P_G(w) \ge \epsilon$, then $v$
is an identity of $G$?
\end{prob}

Clearly, a positive answer to Problem~\ref{open2} implies a positive
answer to Problem~\ref{open1}.
Both seem to be very challenging questions, which might have
negative answers in general.
However, in some special cases they are solved affirmatively. 
For example, 
if $w = [x_1,x_2]$ or $w = x_1^2$, then it is known (see \cite{Ne} and
\cite{Ma}) that
for a finite group $G$, if $P_G(w) \ge \e > 0$, then $G$ is
bounded-by-abelian-by-bounded (in terms of $\e$). This implies
affirmative answers to Problems \ref{open1} and \ref{open2} for these particular
words $w$.

In general we cannot answer these problems for words of
the form $x_1^k$ ($k>2$).
However, for a prime $p$, a result of Khukhro \cite{Kh}
shows that, if $G$ is a finitely
generated pro-$p$ group satisfying a coset identity $x_1^p$
(namely there is a coset of an open subgroup consisting of
elements of order $p$ or $1$) then $G$ is virtually nilpotent
(and hence satisfies an identity). 

Another positive indication is the result showing that for
a (nonabelian) finite simple group $T$ and a non-trivial word $w$ we have
$P_T(w) \rightarrow 0$ as $|T| \rightarrow \infty$
(see \cite{DPPS} for this result, and also \cite{Fibers} for upper
bounds on $P_T(w)$ of the from $|T|^{-\alpha_w}$). 
This implies that a finite simple group $T$ satisfying
$P_T(w) \ge \epsilon > 0$ is of bounded size, hence it satisfies
an identity (depending on $w$ and $\epsilon$ only).

Affirmative answers to Problems~\ref{open1} and ~\ref{open2} 
would have far reaching applications. The argument proving 
Theorem~\ref{first-cor} above also proves the following.

\begin{prop} Assume Problem~\ref{open1} has a positive answer,
and let $\Gamma$ be a finitely generated residually finite group.
Then either $\Gamma$ satisfies an identity or $\Gamma$ is randomly free.

In particular, 

(i) If $\Gamma$ does not satisfy an identity then $\widehat{\Gamma}$
has a nonabelian free subgroup.

(ii) If $\widehat{\Gamma}$ has a nonabelian free subgroup then
almost all $n$-tuples in  $\widehat{\Gamma}$ freely generate
a free subgroup.
\end{prop}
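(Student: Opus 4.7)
The plan is to imitate, essentially verbatim, the proof of Theorem~\ref{first-cor}, replacing the appeal to Theorem~\ref{main} by the hypothesized positive answer to Problem~\ref{open1}. Concretely, I would first suppose that $\Gamma$ does not satisfy an identity. By the hypothesized positive answer, $\Gamma$ then also does not satisfy any probabilistic identity, so by the reformulation of probabilistic identities recalled in the introduction, for every non-trivial $w\in F_n$ the Haar measure of the set $E_w := \{(g_1,\ldots,g_n)\in \widehat{\Gamma}^n : w(g_1,\ldots,g_n)=1\}$ is zero. Since $F_n$ is countable, $\sigma$-additivity yields $\mu(\bigcup_{w\neq 1} E_w)=0$, and the complement of this union is precisely the set of $n$-tuples that freely generate a free subgroup of $\widehat{\Gamma}$. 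This proves the main dichotomy.

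For (i), I would specialize to $n=2$ in the main statement: a full-measure set of pairs in $\widehat{\Gamma}$ freely generates a copy of $F_2$, and in particular $\widehat{\Gamma}$ contains a nonabelian free subgroup. For (ii), I would argue the contrapositive. If $\widehat{\Gamma}$ contains a nonabelian free subgroup, then $\widehat{\Gamma}$ satisfies no non-trivial identity, since free groups do not. The key small observation is that $\Gamma$ inherits this: every word map $w\colon\widehat{\Gamma}^n\to\widehat{\Gamma}$ is continuous for the profinite topology, so if it vanishes on the dense set $\Gamma^n$ it must vanish on all of $\widehat{\Gamma}^n$ by Hausdorffness, contradicting the presence of a free subgroup. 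Thus $\Gamma$ satisfies no identity, and the main statement yields that $\Gamma$ is randomly free.

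The main obstacle is essentially nil: the proof is a routine adaptation of the argument already given for Theorem~\ref{first-cor}, the only extra ingredient being the elementary continuity/density step used in part (ii) to transfer the absence of identities from $\widehat{\Gamma}$ back to $\Gamma$.
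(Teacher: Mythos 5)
Your proof is correct and follows essentially the same route the paper indicates: rerun the $\sigma$-additivity argument from the proof of Theorem~\ref{first-cor}, substituting the hypothesized positive answer to Problem~\ref{open1} for Theorem~\ref{main}. The continuity-and-density observation you add in part (ii), transferring the absence of an identity from $\widehat{\Gamma}$ back to $\Gamma$, is precisely the (standard) bridge that the paper leaves implicit.
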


The next application concerns residual properties of free groups. 
It is well known that the free group $F_n$ is residually-$p$.
But when is it residually $X$ for a collection $X$ of finite
$p$-groups? If this is the case then $F_n$ is also residually $Y$
where $Y$ is the subset of $X$ consisting of $n$-generated $p$-groups.
Thus we may replace $X$ by $Y$ and assume all $p$-groups in $X$
are $n$-generated. It is also clear that if $F_n$ ($n > 1$) is residually
$X$ then the groups in $X$ do not satisfy a common identity
(namely they generate the variety of all groups).

It turns out that, assuming an affirmative answer to Problem~\ref{open2},
these conditions are also sufficient.

\begin{prop} 
\label{res-X}
Assume Problem~\ref{open2} has a positive answer. 
Let $n \ge 2$ be an integer, $p$ a prime, and $X$ a set of $n$-generated
finite $p$-groups. Then the free group $F_n$ is residually 
$X$ if and only if the groups in $X$ do not satisfy a common identity.
\end{prop}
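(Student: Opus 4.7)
The ``only if'' direction is immediate: if some $1\ne v\in F_m$ were a common identity of all $G\in X$, choose elements $y_1,\ldots,y_m\in F_n$ with $v(y_1,\ldots,y_m)\ne 1$ in $F_n$ (which exist because the free group satisfies no non-trivial identity); then the non-trivial element $v(y_1,\ldots,y_m)$ would lie in the kernel of every homomorphism $F_n\to G$ with $G\in X$, contradicting the hypothesis that $F_n$ is residually $X$. For the converse I argue by contradiction: suppose the groups in $X$ have no common identity, but $F_n$ is not residually $X$. Then there exists $1\ne w\in F_n$ lying in the kernel of every surjective homomorphism $F_n\to G$ with $G\in X$. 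Equivalently, $w(g_1,\ldots,g_n)=1$ whenever $(g_1,\ldots,g_n)$ is a generating $n$-tuple of a group in $X$.

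The next step upgrades this to a uniform probabilistic identity, using the Burnside basis theorem: in a finite $p$-group $G$, an $n$-tuple generates $G$ if and only if its image in the Frattini quotient $G/\Phi(G)\cong(\F_p)^{d(G)}$ generates that elementary abelian group. Since every $G\in X$ is $n$-generated, $d(G)\le n$, so the proportion of generating $n$-tuples in $G^n$ equals
\[
\prod_{j=n-d(G)+1}^{n}(1-p^{-j})\ \ge\ \prod_{j=1}^{\infty}(1-p^{-j})\ =:\ c(p)\ >\ 0.
\]
Consequently $P_G(w)\ge c(p)$ for every $G\in X$, i.e., $w$ is a probabilistic identity of each $G\in X$ with the common constant $\epsilon=c(p)$, a lower bound depending only on $p$ and not on the particular group $G\in X$.

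Applying the assumed positive answer to Problem~\ref{open2} with this $w$, with $d=n$, and with $\epsilon=c(p)$ yields a non-trivial word $v\in F_m$ which is an identity of every $n$-generated finite group $H$ satisfying $P_H(w)\ge c(p)$, and hence of every $G\in X$. This contradicts the no-common-identity hypothesis and completes the proof. The crux of the argument — and its only non-formal ingredient — is the realization that, for $p$-groups, the failure of residual $X$-ness produces a \emph{uniform} probabilistic identity; the passage from ``vanishes on generating tuples'' to ``vanishes with uniformly positive probability'' is precisely what allows a single application of Problem~\ref{open2} to the entire family $X$. Everything else is bookkeeping around the Frattini-quotient counting.
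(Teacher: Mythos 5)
Your proposal is correct and follows essentially the same route as the paper: negate residual-$X$-ness to get a word $w$ vanishing on all generating $n$-tuples, use the Frattini quotient (Burnside basis theorem) to bound the proportion of generating $n$-tuples below by a constant depending only on $p$, conclude $P_G(w)\ge\epsilon$ uniformly, and invoke Problem~\ref{open2} with $d=n$ to produce a common identity, contradicting the hypothesis. The only cosmetic differences are that you spell out the easy ``only if'' direction (the paper dismisses it in a sentence before the proposition) and you use the slightly smaller but $n$-independent constant $\prod_{j=1}^{\infty}(1-p^{-j})$ where the paper uses $\prod_{i=1}^{n}(1-p^{-i})$; both are valid lower bounds.
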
 

To prove this, suppose the groups in $X$ do not satisfy a common identity.
To show that $F_n$ is residually $X$ we have to find, for each 
$1 \ne w=w(x_1, \ldots , x_n) \in F_n$,
a group $G \in X$ and an epimorphism $\phi:F_n \rightarrow G$,
such that $\phi(w) \ne 1$. This amounts to finding 
a group $G \in X$ and an $n$-tuple $g_1, \ldots , g_n \in G$ generating $G$ 
such that $w(g_1, \ldots , g_n) \ne 1$ (and then $\phi$ is defined by
sending $x_i$ to $g_i$). Suppose, given $w$, that there is
no $G \in X$ with such an $n$-tuple. Then, for every $G \in X$, and every
generating $n$-tuple $(g_1, \ldots , g_n) \in G^n$, we have 
$w(g_1, \ldots , g_n) = 1$. Now, the probability that
a random $n$-tuple in $G^n$ generates $G$ is the probability
that its image in $V^n$ spans $V$, where $V = G/\Phi(G)$ is the
Frattini quotient of $G$, regarded a vector space of dimension
$\le n$ over the field with $p$ elements. This probability is at least 
$\epsilon:=\prod_{i=1}^n (1-p^{-i}) > 0$. Thus $P_G(w) \ge \epsilon$
for all $G \in X$. By the affirmative
answer to Problem~\ref{open2}, all the groups $G \in X$ satisfy a common
identity $v \ne 1$ (which depends on $w$, $n$ and $p$).
This contradiction proves Proposition~\ref{res-X}.

This argument can be generalized to cases when $X$ consists
of finite groups $G$ with the property that $n$ random elements
of $G$ generate $G$ with probability bounded away from zero.
See \cite{JP} and the references therein for the description
of such groups and the related notion of positively finitely
generated profinite groups.

\bigskip
\bigskip

\end{document}